\begin{document}

\newtheorem{theorem}{Theorem}[section]
\newtheorem{corollary}[theorem]{Corollary}
\newtheorem{definition}[theorem]{Definition}
\newtheorem{conjecture}[theorem]{Conjecture}
\newtheorem{question}[theorem]{Question}
\newtheorem{lemma}[theorem]{Lemma}
\newtheorem{remark}[theorem]{Remark}
\newtheorem{proposition}[theorem]{Proposition}
\newtheorem{example}[theorem]{Example}
\newenvironment{proof}{\noindent {\bf
Proof.}}{\rule{3mm}{3mm}\par\medskip}
\newcommand{\pp}{{\it p.}}
\newcommand{\de}{\em}

\newcommand{\JEC}{{\it Europ. J. Combinatorics},  }
\newcommand{\JCTB}{{\it J. Combin. Theory Ser. B.}, }
\newcommand{\JCT}{{\it J. Combin. Theory}, }
\newcommand{\JGT}{{\it J. Graph Theory}, }
\newcommand{\ComHung}{{\it Combinatorica}, }
\newcommand{\DM}{{\it Discrete Math.}, }
\newcommand{\ARS}{{\it Ars Combin.}, }
\newcommand{\SIAMDM}{{\it SIAM J. Discrete Math.}, }
\newcommand{\SIAMADM}{{\it SIAM J. Algebraic Discrete Methods}, }
\newcommand{\SIAMC}{{\it SIAM J. Comput.}, }
\newcommand{\ConAMS}{{\it Contemp. Math. AMS}, }
\newcommand{\TransAMS}{{\it Trans. Amer. Math. Soc.}, }
\newcommand{\AnDM}{{\it Ann. Discrete Math.}, }
\newcommand{\NBS}{{\it J. Res. Nat. Bur. Standards} {\rm B}, }
\newcommand{\ConNum}{{\it Congr. Numer.}, }
\newcommand{\CJM}{{\it Canad. J. Math.}, }
\newcommand{\JLMS}{{\it J. London Math. Soc.}, }
\newcommand{\PLMS}{{\it Proc. London Math. Soc.}, }
\newcommand{\PAMS}{{\it Proc. Amer. Math. Soc.}, }
\newcommand{\JCMCC}{{\it J. Combin. Math. Combin. Comput.}, }
\newcommand{\GC}{{\it Graphs Combin.}, }
\title{The Distance Energy of  Clique Trees\thanks{ This work is Supported by the National Natural Science Foundation of China (Nos. 11601337, 11971311,11531001 and 11971319), the Foundation of Shanghai Normal University (No. SK201602) and the Foundation of Shanghai Municipal Education Commission,  and the Montenegrin-Chinese Science and Technology Cooperation Project (No.3-12).}}
\author{
Ya-Lei Jin$^1$, Rui Gu$^1$, Xiao-Dong Zhang$^2$ \\
$^1$Department of Mathematics,
Shanghai Normal University, \\
 100 Guilin road, Shanghai, 200234, P.R. China
\\
$^2$School of Mathematical Sciences, MOE-LSC, SHL-MAC,\\ Shanghai Jiao Tong University,
\\ 800 Dongchuan road, Shanghai, 200240,  P.R. China.
\\
Email: yaleijin@shnu.edu.cn, gurui259011@163.com, xiaodong@sjtu.edu.cn
 }


\maketitle

\begin{abstract}  The distance energy of a simple connected graph $G$ is  defined as the sum of absolute values of its  distance eigenvalues. In this paper, we mainly give a positive answer to a conjecture of distance energy of clique trees proposed by Lin, Liu and Lu [H.~Q.~ Lin, R.~F.~Liu, X.~W.~Lu, The inertia and energy of the distance matrix of a connected graph, {\it Linear Algebra Appl.,} 467 (2015), 29-39.].
\end{abstract}

{{\bf Key words:}
Distance energy,  Clique tree, Equitable partition, Spectral radius.}

{{\bf AMS Classifications:} 05C50, 05C35}.

\section{Introduction}

  Graham and Pollak  \cite{graham1971} in 1971 discovered an interesting and elegant result on the distance matrix of a tree that  the determinant of distance matrix of  any $n-$vertex tree   is $(-1)^{n-1}(n-1)2^{n-2}$, which is independent with the structure of  trees. Furthermore, Graham  and Lov\'{a}sz in 1978 \cite{graham1978}  derived  the coefficients of the characteristic polynomial of the distance matrix of a graphs  in term of  as certain tied linear combinations of the numbers of various subgraphs of a graph.  Recently,  Cheng and Lin \cite{cheng2018} presented a class of graph whose distance determinant  are independent of their structures.
    These results motivated that properties of distance matrix of a graph have been investigated.  There are some   excellent surveys  (see \cite{aouchiche2014, bapat2014, stevanovic2012}) on this topic.

Let $G$ be a connected graph of order $n$ with vertex set $V(G)$ and edge set $E(G)$. Thus the distance matrix of $G$ is defined as $D(G)=(d_{uv})_{n\times n}$, where $d_{uv}$ is the distance between vertices $u$ and $v$ in graph $G$. Clearly,  $D(G)$ is a symmetric matrix and  its eigenvalues are real, denoted  by $\lambda_1(G)\ge \lambda_2(G)\ge \cdots\ge \lambda_n(G)$. Moreover, $\lambda_1(G)$ is called the {\it distance spectral radius} of $G$ and is denoted by $\lambda(G)$.
   Further, Indulal, Gutman and Vijaykumar \cite{indulal} in 2009 introduced  distance energy which is defined as
$$E_D(G)=\sum_{i=1}^n|\lambda_i(G)|.$$
 They obtained some sharp bounds for the distance spectral radius and D-energy of graphs with diameter 2.
 Andeli\'{c}, Koledin and  Zoran Stani\'{c} in \cite{andelic2017} obtained the exact value of  the distance energy of several types of graphs.
 Recently, Varghesea, So and Vijayakumarc \cite{varghese2018} studied how the distance energy of complete bipartite graphs changes when an edge is deleted.  Vaidya and Popat \cite{vaidya2017}  studied the distance energy  of two particular graph compositions. Recently, Stevanov\'{i}c \cite{stevanovic2018} gave more results on the graph composition.

A {\it clique tree} is a graph whose blocks are cliques (for example see \cite{blair1993}), a {\it clique path} $\mathbb{P}_{n_1,n_2,\cdots,n_k}$ is the graph obtained from the path $P_{k+1}=v_1\cdots v_{k+1}$ with order $k+1$ replacing $v_iv_{i+1}$  by $K_{n_i}$ such that
$$V(K_{n_i})\cap V(K_{n_{i+1}})=v_{i+1},~i=1,2,\cdots,k-1~\mbox{ and } V(K_{n_i})\cap V(K_{n_{j}})=\emptyset\mbox{ if }i\neq j.$$  Let $n_+(D(G)),n_-(D(G))$ denote the the number of positive and negative eigenvaues of $D(G)$, respectively. Lin, Liu and Lu \cite{Linl2015} proved  that
 \begin{theorem}\label{InT}\cite{Linl2015}
 Let $G$ be a clique tree with order $n$. Then $n_+(D(G))=1$ and $n_-(D(G))=n-1$.
 \end{theorem}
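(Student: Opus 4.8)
The plan is to induct on the number of blocks of the clique tree $G$, peeling off one leaf clique at each step and tracking the inertia through Sylvester's law together with Haynsworth's inertia-additivity formula for Schur complements. The base case is a single clique $G=K_p$, where $D(K_p)=J-I$ has eigenvalue $p-1$ (simple) and $-1$ (with multiplicity $p-1$), giving inertia $(1,0,p-1)$ at once. For the inductive step I would choose a leaf block $K$ of $G$, i.e. a clique meeting the rest of $G$ in a single cut vertex $c$, and let $W=\{w_1,\dots,w_t\}$ be its $t\ge 1$ non-cut vertices. These $t$ vertices are \emph{true twins} lying at mutual distance $1$, so every sum-zero vector supported on $W$ is an eigenvector of $D(G)$ for the eigenvalue $-1$; equivalently, the partition separating $W$ from the singletons of $V(G)\setminus W$ is equitable.

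Passing to an orthonormal basis of the $W$-coordinates consisting of $f_0=\tfrac{1}{\sqrt t}\mathbf 1_W$ together with $t-1$ sum-zero vectors $f_1,\dots,f_{t-1}$ block-diagonalizes $D(G)$ (by orthogonal congruence) into $-I_{t-1}$ on $\mathrm{span}\{f_1,\dots,f_{t-1}\}$ and a bordered matrix $\tilde D=\begin{pmatrix} t-1 & \sqrt t\,\delta^{\top}\\ \sqrt t\,\delta & D(G')\end{pmatrix}$ on $\mathrm{span}\{f_0\}$ together with $V\setminus W$, where $G'=G-W$ is a clique tree with one fewer block and $\delta=D(G')e_c+\mathbf 1$. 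This step contributes exactly $t-1$ negative eigenvalues, so it remains to prove $\mathrm{inertia}(\tilde D)=(1,0,n-t)$. Since $D(G')$ is nonsingular by the induction hypothesis, Haynsworth additivity gives $\mathrm{inertia}(\tilde D)=\mathrm{inertia}(D(G'))+\mathrm{inertia}(s)$ with Schur complement $s=(t-1)-t\,\delta^{\top}D(G')^{-1}\delta$. Using $\delta=D(G')e_c+\mathbf 1$ one finds $\delta^{\top}D(G')^{-1}\delta=2+\sigma'$, where $\sigma':=\mathbf 1^{\top}D(G')^{-1}\mathbf 1$, so that $s=-(t+1)-t\sigma'$.

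Everything therefore hinges on the sign of $s$, and this is the main obstacle: $s<0$ exactly when $\sigma'>-\tfrac{t+1}{t}$, and the sign of $\mathbf 1^{\top}D^{-1}\mathbf 1$ is \emph{not} forced by the inertia alone, since in the spectral expansion $\mathbf 1^{\top}D^{-1}\mathbf 1=\sum_i (\mathbf 1^{\top}u_i)^2/\lambda_i$ the single positive Perron term and the negative terms could in principle cancel. To overcome this I would strengthen the induction hypothesis to the conjunction ``$\mathrm{inertia}(D(G))=(1,0,n-1)$ \emph{and} $\sigma(G):=\mathbf 1^{\top}D(G)^{-1}\mathbf 1>0$,'' which holds in the base case because $\sigma(K_p)=\tfrac{p}{p-1}>0$. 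Granting $\sigma'>0$ makes $s=-(t+1)-t\sigma'<0$ immediate, which yields $\mathrm{inertia}(\tilde D)=(1,0,n-t)$ and hence $\mathrm{inertia}(D(G))=(1,0,n-1)$.

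It then remains to re-establish the auxiliary inequality $\sigma(G)>0$ so that the induction closes. Here I would express $\mathbf 1_n$ in the new basis as $(\sqrt t,\mathbf 1^{\top},0,\dots,0)$ and invert $\tilde D$ blockwise; using $\delta^{\top}D(G')^{-1}\mathbf 1=1+\sigma'$ the cross terms telescope into the perfect square $t\big[(1+\sigma')-1\big]^2=t\sigma'^2$, collapsing the whole expression to the clean recursion $\sigma(G)=\dfrac{(t+1)\,\sigma'}{(t+1)+t\,\sigma'}$. This is manifestly positive whenever $\sigma'>0$, so both halves of the strengthened hypothesis propagate and the induction is complete, giving $n_+(D(G))=1$ and $n_-(D(G))=n-1$. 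The one routine point to verify along the way is that peeling $W$ does leave a genuine clique tree $G'$ with one fewer block and at least two vertices, so that the induction bottoms out precisely at the single-clique base case.
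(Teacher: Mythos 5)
Your argument is correct, and it is worth noting that the paper itself offers no proof to compare against: Theorem~\ref{InT} is simply quoted from Lin, Liu and Lu \cite{Linl2015}, so your write-up is an independent, self-contained proof of a result the paper treats as a black box. The structure is sound: the base case $D(K_p)=J-I$ is right; sum-zero vectors supported on the non-cut vertices $W$ of a leaf block are genuine $(-1)$-eigenvectors (the off-$W$ entries of $D(G)x$ vanish because each column indexed by $v\notin W$ is constant, equal to $d(c,v)+1$, on $W$), so the orthogonal change of basis really does split off $-I_{t-1}$ and leave the bordered matrix $\tilde D$; the Schur-complement computation $\delta^{\top}D(G')^{-1}\delta=2+\sigma'$ and $s=-(t+1)-t\sigma'$ checks out; and you correctly identified the genuine obstacle, namely that Haynsworth additivity alone does not determine the sign of $s$. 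Strengthening the induction hypothesis to carry the invariant $\sigma(G)=\mathbb{1}^{\top}D(G)^{-1}\mathbb{1}>0$, verified in the base case as $p/(p-1)$ and propagated by the recursion $\sigma(G)=(t+1)\sigma'/\big((t+1)+t\sigma'\big)$ (which I have verified from the blockwise inverse), is exactly the right fix and closes the loop. As a bonus your argument also yields nonsingularity of $D(G)$ for every clique tree and an explicit recursion for $\mathbb{1}^{\top}D(G)^{-1}\mathbb{1}$, neither of which the citation route provides. The only cosmetic caveats: the theorem implicitly assumes $n\ge 2$, and the phrase ``by orthogonal congruence'' undersells what you actually have, which is an orthogonal similarity preserving eigenvalues, not merely inertia.
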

Furthermore, they proposed the following conjecture.
\begin{conjecture}\cite{Linl2015}\label{con}
Among all clique trees with cliques $K_{n_1} , ..., K_{n_k}$, the graph attains the
maximum distance energy is $\mathbb{P}_{\lfloor \frac{n-k+3}{2}\rfloor,2,\cdots,2,\lceil \frac{n-k+3}{2}\rceil}$.
\end{conjecture}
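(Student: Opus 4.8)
The plan is to first reduce the energy problem to a spectral-radius problem, and then to run a sequence of structural transformations that push an arbitrary clique tree toward the conjectured extremal graph without decreasing its distance spectral radius. The reduction is immediate from Theorem~\ref{InT}: for any clique tree $G$ of order $n$ we have $n_+(D(G))=1$ and $n_-(D(G))=n-1$, so $D(G)$ has a single positive eigenvalue $\lambda_1(G)=\lambda(G)$ and $n-1$ negative ones, with no zero eigenvalue. Since $\mathrm{tr}\,D(G)=0$, the negative eigenvalues sum to $-\lambda(G)$, whence
$$E_D(G)=\lambda(G)+\sum_{i\ge 2}|\lambda_i(G)|=\lambda(G)-\sum_{i\ge 2}\lambda_i(G)=2\lambda(G).$$
Thus, over the class of clique trees with $n$ vertices and $k$ blocks, maximizing $E_D$ is exactly the same as maximizing the distance spectral radius $\lambda$, and it suffices to identify the clique tree of maximum $\lambda$.

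The engine for the comparisons is Perron--Frobenius theory. Because $G$ is connected, $D(G)$ is a nonnegative irreducible matrix, so $\lambda(G)$ is simple and admits a positive (Perron) eigenvector $x$, and for any competitor $G'$ on the same vertex set one has $\lambda(G')\ge x^{\top}D(G')x/(x^{\top}x)$, with equality if and only if $x$ is a Perron vector of $G'$ as well. I would route every transformation through this inequality: each step is chosen so that it strictly increases a large collection of pairwise distances while leaving the remaining ones unchanged, forcing $x^{\top}D(G')x>x^{\top}D(G)x=\lambda(G)\,x^{\top}x$ and hence $\lambda(G')>\lambda(G)$.

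There are three transformations. \emph{(i) De-branching.} If the block tree of $G$ is not a path, there is a cut vertex lying on three or more blocks, equivalently a branch hanging off an interior vertex; detaching such a branch and re-attaching it at a block of maximum eccentricity lengthens the clique path and strictly increases the distances from the moved vertices to the far end, so after finitely many steps the extremal graph is a clique path $\mathbb{P}_{n_1,\dots,n_k}$. \emph{(ii) Pushing vertices to the ends.} Within a clique path, deleting a vertex from an interior clique $K_{n_i}$ and inserting it into one of the two terminal cliques strictly increases total distances (an interior vertex sits within the radius of everything, whereas a terminal vertex is at the full diameter from the opposite end); iterating drives every interior clique down to size $2$, so the extremal graph has the form $\mathbb{P}_{a,2,\dots,2,b}$ with $a+b=n-k+3$. \emph{(iii) Balancing the two ends.} It remains to show that, among these, $\lambda$ is maximized when $\{a,b\}=\{\lfloor\frac{n-k+3}{2}\rfloor,\lceil\frac{n-k+3}{2}\rceil\}$.

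The last step is where I expect the genuine difficulty, and I would attack it with an equitable partition. In $\mathbb{P}_{a,2,\dots,2,b}$ the $a-1$ non-cut vertices of the first clique are mutually interchangeable, as are the $b-1$ non-cut vertices of the last clique, while the cut vertices form the backbone; grouping vertices accordingly gives an equitable partition whose quotient matrix $B(a,b)$ has order $O(k)$ and the same spectral radius $\lambda$ as $D(G)$. The problem then reduces to showing that the Perron root of $B(a,b)$, with $a+b$ fixed, is Schur-concave in $(a,b)$, i.e.\ that transferring one vertex from the larger terminal clique to the smaller one raises $\lambda$; a direct check of the quotient matrix already confirms this direction in the base case $k=2$. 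In general this should follow from the symmetry of the two ends together with a Perron-vector perturbation argument, but controlling the sign of the resulting change (and hence proving \emph{strict} monotonicity, which yields uniqueness of the maximizer) requires an explicit analysis of the characteristic polynomial of $B(a,b)$; pinning down this inequality cleanly, rather than the reductions (i)--(ii), is the main obstacle. Strictness throughout also rests on the equality cases of the Perron comparison above, which hold precisely when a transformation genuinely alters some distance.
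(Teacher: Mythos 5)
Your reduction from distance energy to distance spectral radius via Theorem~\ref{InT} and the zero trace is exactly the paper's equation~(\ref{Eq1}), and your identification of the final step (balancing the two end cliques of $\mathbb{P}_{a,2,\dots,2,b}$ via an equitable partition and its quotient matrix) is also the route the paper takes. But the proposal has two problems. First, your steps (i)--(ii) are argued by claiming that each relocation weakly increases every entry of the distance matrix, so that $x^{\top}D(G')x>x^{\top}D(G)x$ for the Perron vector $x$; this entrywise monotonicity is false. When you detach a branch from an interior cut vertex and reattach it at a block of maximum eccentricity, the distances from the moved vertices to the vertices near the \emph{new} attachment point strictly decrease, and likewise moving a vertex from an interior clique to a terminal clique decreases its distance to the vertices of that terminal clique. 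So the Rayleigh-quotient comparison does not follow as stated. The paper sidesteps this entirely by citing the reduction to $\mathbb{P}_{n_1+1,2,\dots,2,n_2+1}$ as Lemma~\ref{L2.7}, which is already proved in \cite{Linl2015}; if you want to reprove it you need a genuinely different (and more careful) argument than ``all distances increase.''

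Second, and more seriously, step (iii) --- which you yourself flag as ``the main obstacle'' and leave unresolved --- is the entire content of the paper's contribution, so the proposal does not constitute a proof. The paper closes this gap by an explicit computation: writing $\alpha_{k-1}=u_{k-1}^{\top}(xI-D(P_{k-1}))^{-1}u_{k-1}$, $\beta_{k-1}=u_{k-1}^{\top}(xI-D(P_{k-1}))^{-1}w_{k-1}$ and $\gamma_{k-1}=\alpha_{k-1}-\beta_{k-1}$, Lemma~\ref{Lem2} gives
\[
f(n_1,n_2,x)-f(n_1',n_2',x)=(n_1n_2-n_1'n_2')\,(1+\alpha_{k-1}+k+\beta_{k-1})(1-k+\gamma_{k-1})\det(xI-D(P_{k-1})),
\]
so the sign of the difference of characteristic polynomials is governed by $n_1n_2-n_1'n_2'$ once one shows $\gamma_{k-1}<k-1$ for all $x\ge\lambda(P_k)$. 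That estimate is the crux, and it is obtained by combining $\lVert u_{k-1}-w_{k-1}\rVert^2=\tfrac{(k-1)k(k-2)}{3}$, the orthogonality of $u_{k-1}-w_{k-1}$ to the Perron vector of $D(P_{k-1})$ (which lets one bound the quadratic form by $1/(x-\lambda_2(D(P_{k-1})))$), the inertia fact $\lambda_2(D(P_{k-1}))<0$ from Theorem~\ref{InT}, and the lower bound $\lambda(P_k)>\tfrac{(k-1)(k+1)}{3}$. None of this appears in your proposal, and without some such quantitative control on the quotient matrix the Schur-concavity claim in (iii) remains unproved.
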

From theorem~\ref{InT}, the distance energy of a clique tree $G$ satisfies
 \begin{equation}\label{Eq1}E_D(G)=\sum_{i=1}^n|\lambda_i(G)|=\lambda_1(G)-\sum_{i=2}^n\lambda_i(G)=2\lambda_1(G),\end{equation}
 the last equation is from that the trace of $D(G)$ is $\sum_{i=1}^n\lambda_i(G)=0$. Thus it is sufficient to consider maximum spectral radius of clique trees  instead of maximum distance energy of clique trees.

   In addition,    Zhang \cite{zhang2019} studied the relation between the inertia  and the distance energy  for the line graph of unicyclic graphs.
               Moreover,  Consonni and Todeschini \cite{Consonni2008} investigated the distance energy in terms of some invariants. Drury and Lin \cite{Drury2015} characterized all connected graphs with distance energy in $[2n-2,2n]$.
   On the distance spectral radius of graphs, Liu \cite{Liu} investigated the graphs with minimal distance spectral radius among the graphs with fixed vertex connectivity, matching number and chromatic number. Ili\'{c} \cite{Ilic} characterized the minimal spectral radius among trees given matching number. Bose, Nath and Paul \cite{Bose} studied the connected graph with minimal distance spectral radius among all graphs fixed number of pendent vertices. Zhang \cite{Zhang} explored the graph with minimum distance spectral radius among all connected graphs with given diameter. Lin and Shu \cite{LinS2013} gave some sharp lower and upper bounds for the distance spectral radius and characterized the graphs with the spectral radius attaining the bounds. Lin and Feng \cite{Lin2015} characterized the connected graphs with minimal or maximal distance spectral radius among all graphs with fixed independence number.

     The main purpose of this paper is to gave a positive answer of conjecture~\ref{con}.

\section{Proof of Conjecture~\ref{con}}
Let $A$ be a symmetric matrix, $Y=\{1,2,\cdots,n\}$ be the index set of rows and columns of matrix $A$ and $\mathbb{1}_n$ be the all ones vector with $n$ elements, written by $\mathbb{1}$ if no ambiguity. Suppose $\{Y_1,Y_2,\cdots,Y_m\}$ is a partition of the set $Y$, the matrix $B=(b_{ij})$ is called {\it quotient matrix} of $A$ corresponding to the partittion, where $A_{i,j}$ is a submatrix of $A$ corresponding to the row index and column index $Y_i$ and $Y_j$ respectively and $b_{ij}$ is the average row sum of matrix $A_{i,j}$. The $n\times m$ matrix $S=(s_{ij})$ is called {\it characteristic matrix}, where $s_{ij}=1$ if $i\in Y_{j}$ and $s_{ij}=0$ if $i\notin Y_{j}$ . The partition is called {\it equitable partition} if $A_{i,j}\mathbb{1}=b_{ij}\mathbb{1}$. For more details, the readers can refer to \cite{Brouwer}.
\begin{lemma}\label{Eq-P}\cite{Brouwer} Let $B$ be the quotient matrix of $A$ corresponding to an equitable partition. If $v$ is an eigenvector of $B$ for an eigenvalue $\mu$, then $Sv$ is an eigenvector of $A$ for the same eigenvalues $\mu$, where $S$ is the characteristic matrix corresponding to the equitable partition.
\end{lemma}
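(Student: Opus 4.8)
The plan is to reduce the whole statement to the single intertwining identity $AS = SB$, after which the conclusion is immediate. So the first thing I would do is prove that identity, and everything else is bookkeeping. Write $s_j$ for the $j$-th column of $S$; by definition of the characteristic matrix, $s_j$ is exactly the indicator vector $\mathbb{1}_{Y_j}$ of the block $Y_j$.

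The key step is to compute the $j$-th column of each side of $AS=SB$ and compare. The $j$-th column of $AS$ is $As_j$, and its entry at an index $p$ equals $\sum_{q\in Y_j}A_{pq}$, i.e. the sum of the entries of row $p$ of $A$ that lie in the columns of $Y_j$. If $p\in Y_i$, this is precisely the $p$-th row sum of the submatrix $A_{i,j}$. This is exactly where the equitability hypothesis does all the work: the defining condition $A_{i,j}\mathbb{1}=b_{ij}\mathbb{1}$ says that every row of $A_{i,j}$ has the same sum $b_{ij}$, so the entry of $As_j$ at every $p\in Y_i$ equals $b_{ij}$, independent of the particular $p$. Hence $As_j=\sum_i b_{ij}\,s_i$. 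On the other hand, the $j$-th column of $SB$ is $S$ applied to the $j$-th column of $B$, which is just the combination $\sum_i b_{ij}\,s_i$ of the columns of $S$. The two agree for every $j$, so $AS=SB$.

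With the identity in hand, the conclusion drops out: if $Bv=\mu v$, then $A(Sv)=(AS)v=(SB)v=S(Bv)=\mu\,Sv$, so $Sv$ is an eigenvector of $A$ for the eigenvalue $\mu$, provided $Sv\neq 0$. The last point is where I would add the one line that the statement implicitly needs: the columns $s_1,\dots,s_m$ are indicator vectors of the disjoint nonempty blocks $Y_1,\dots,Y_m$, hence linearly independent, so $S$ has trivial kernel and $v\neq 0$ forces $Sv\neq 0$. The only genuinely nontrivial point in the whole argument is recognizing that the equitability condition is exactly equivalent to the column-constancy that makes $AS=SB$ hold; once that is seen, there is no real obstacle, and I would remark that symmetry of $A$ is never used, so the lemma in fact holds for arbitrary square $A$.
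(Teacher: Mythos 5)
The paper does not prove this lemma at all---it is quoted directly from Brouwer--Haemers---so there is no internal proof to compare against; your argument is precisely the standard one from that reference: derive the intertwining identity $AS=SB$ from the equitability condition and then read off $A(Sv)=\mu Sv$. It is correct and complete, including the often-omitted check that $Sv\neq 0$ (the columns of $S$ are indicators of disjoint nonempty blocks, hence linearly independent), and your closing remark that symmetry of $A$ is never used is also accurate.
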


\begin{corollary}\label{cor1}
$$\lambda({\mathbb{P}_{n_1+1,2,\cdots,2,n_2+1}})=\lambda(B(\mathbb{P}_{n_1+1,2,\cdots,2,n_2+1})),$$
where
\begin{eqnarray*}
&&B(\mathbb{P}_{n_1+1,2,\cdots,2,n_2+1})=\left(\begin{matrix}
n_1-1&u_{k-1}^T&kn_2\cr
n_1u_{k-1} &D(P_{k-1}) &n_2w_{k-1} \cr
kn_1&w_{k-1}^T & n_2-1
\end{matrix}\right),~\mbox{and}\\
&&u_k=(1,2,\cdots,k)^T,~~w_k=Qu_k,
\end{eqnarray*}
$Q$ is a permutation matrix, the $(i,k+1-i)$-element of which is one, $1\le i\le k$.
\end{corollary}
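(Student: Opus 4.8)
The plan is to exhibit a natural equitable partition of $V(\mathbb{P}_{n_1+1,2,\cdots,2,n_2+1})$ whose quotient matrix is exactly $B(\mathbb{P}_{n_1+1,2,\cdots,2,n_2+1})$, and then to upgrade the conclusion of Lemma~\ref{Eq-P} from ``$\lambda(B)$ is some eigenvalue of $D(G)$'' to ``$\lambda(B)$ is the distance spectral radius'' by a Perron--Frobenius argument.

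First I would fix notation. Write the underlying path as $v_1\cdots v_{k+1}$, so that $G=\mathbb{P}_{n_1+1,2,\cdots,2,n_2+1}$ has $k$ cliques $C_1=K_{n_1+1}$, $C_2=\cdots=C_{k-1}=K_2$, $C_k=K_{n_2+1}$, with cut vertices $v_2,\dots,v_k$. I would consider the partition $\{Y_1,\dots,Y_{k+1}\}$ in which $Y_1$ consists of the $n_1$ simplicial vertices of $C_1$ (that is, $v_1$ together with the $n_1-1$ vertices of $C_1$ lying in no other clique), $Y_i=\{v_i\}$ for $2\le i\le k$, and $Y_{k+1}$ consists of the $n_2$ simplicial vertices of $C_k$. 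The distance facts I would record are: every vertex of $Y_1$ is at distance $1$ from the other $n_1-1$ vertices of $Y_1$ and at distance $j-1$ from $v_j$; $d(v_i,v_j)=|i-j|$; every vertex of $Y_{k+1}$ is at distance $k+1-j$ from $v_j$; and every vertex of $Y_1$ is at distance $k$ from every vertex of $Y_{k+1}$.

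Second I would verify equitability and read off the quotient matrix. Because the simplicial vertices inside $C_1$ (resp. inside $C_k$) are mutually interchangeable and the remaining cells are singletons, each block $D(G)_{i,j}$ has constant row sums, so the partition is equitable. Evaluating these common row sums with the distance facts above produces block entries $n_1-1$, $u_{k-1}^T$, $kn_2$ in the first row; $n_1u_{k-1}$, $D(P_{k-1})$, $n_2w_{k-1}$ in the middle rows; and $kn_1$, $w_{k-1}^T$, $n_2-1$ in the last row, which is precisely $B(\mathbb{P}_{n_1+1,2,\cdots,2,n_2+1})$; here the reversal $w_{k-1}=Qu_{k-1}$ simply reflects the symmetry between the two ends. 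This step is mostly bookkeeping, but one must be careful that $v_1$ and $v_{k+1}$ are simplicial and hence belong to $Y_1$ and $Y_{k+1}$ rather than forming their own cells.

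Finally I would pass from eigenvalues to the spectral radius, which I expect to be the main conceptual obstacle, since Lemma~\ref{Eq-P} only places the eigenvalues of $B$ among those of $D(G)$ and does not by itself identify $\lambda(B)$ with the largest one. As $G$ is connected, $D(G)$ is a nonnegative irreducible symmetric matrix; moreover $B$ is nonnegative with all off-diagonal entries positive, hence irreducible. By the Perron--Frobenius theorem $B$ has a positive eigenvector $v$ for its spectral radius $\lambda(B)$, and by Lemma~\ref{Eq-P} the vector $Sv$ is an eigenvector of $D(G)$ for the same value; since $S$ is the characteristic matrix and $v>0$, we have $Sv>0$. A nonnegative irreducible symmetric matrix admits, up to scaling, a unique positive eigenvector, and it corresponds to its largest eigenvalue. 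The positivity of $Sv$ therefore forces $\lambda(B)=\lambda_1(D(G))=\lambda(\mathbb{P}_{n_1+1,2,\cdots,2,n_2+1})$, which is the claimed identity.
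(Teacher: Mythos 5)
Your proposal is correct and follows essentially the same route as the paper: the paper uses exactly the partition $X_1=V(K_{n_1+1})\setminus\{v_2\}$, $X_i=\{v_i\}$ for $2\le i\le k$, $X_{k+1}=V(K_{n_2+1})\setminus\{v_k\}$, verifies it is equitable, and reads off the same quotient matrix. Your final Perron--Frobenius step is a welcome addition, since the paper simply cites Lemma~\ref{Eq-P} (which only places $\lambda(B)$ among the eigenvalues of $D(G)$) and leaves the identification with the largest eigenvalue implicit.
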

\vspace{0.5cm}
$$
\begin{tikzpicture}[scale=0.4]\label{fig3}
\coordinate[label=$v_2$] (I) at (5cm,3.5cm);
\coordinate[label=$v_3$] (I) at (7.5cm,3.5cm);
\coordinate[label=$v_4$] (I) at (10cm,3.5cm);
\coordinate[label=$v_{k-1}$] (I) at (17.5cm,3.5cm);
\coordinate[label=$v_{k}$] (I) at (20.5cm,3.5cm);
\coordinate[label=$K_{n_1+1}$] (I) at (2.5cm,3.8cm);
\coordinate[label=$K_{n_2+1}$] (I) at (23.5cm,3.8cm);
\coordinate[label=Fig.1 $\mathbb{P}_{n_1+1,2,\cdots,2,n_2+1}$] (I) at (13cm,0cm);
\fill[black]
(5cm,5cm) circle(2mm)
(21cm,5cm) circle(2mm)
(10cm,5cm) circle(2mm)
(7.5cm,5cm) circle(2mm)
(17.5cm,5cm) circle(2mm);
\draw[-]
(5cm,5cm)--(10cm,5cm)
(17.5cm,5cm)--(21cm,5cm)
(1cm,2cm)--(1cm,7cm)--(5cm,5cm)--cycle
(21cm,5cm)--(25cm,2cm)--(25cm,7cm)--cycle;
\draw[-,dashed]
(10cm,5cm)--(17cm,5cm);
\end{tikzpicture}$$
\begin{proof}
Let $X_1,X_2,\cdots,X_k,X_{k+1}$ be a partition of $V(\mathbb{P}_{n_1+1,2,\cdots,2,n_2+1})$ with  $X_1=V(K_{n_1+1})\setminus \{v_2\}$, $X_i=\{v_{i}\},i=2,3,\cdots,k$ and $X_{k+1}=V(K_{n_2+1})\setminus\{v_{k}\}$. Then
\begin{displaymath}
D(\mathbb{P}_{n_1+1,2,\cdots,2,n_2+1})=\left(\begin{matrix}
(J-I)_{n_1\times n_1}&\mathbb{1}u_{k-1}^T&kJ_{n_1\times n_2}\cr
u_{k-1}\mathbb{1}^T &D(P_{k-1}) &w_{k-1}\mathbb{1}^T \cr
kJ_{n_2\times n_1}&\mathbb{1}w_{k-1}^T & (J-I)_{n_2\times n_2}
\end{matrix}\right).
\end{displaymath}
The quotient matrix of $D(\mathbb{P}_{n_1+1,2,\cdots,2,n_2+1})$ corresponding to $X_1,X_2,\cdots,X_{k+1}$ is
\begin{displaymath}
B(\mathbb{P}_{n_1+1,2,\cdots,2,n_2+1})=\left(\begin{matrix}
n_1-1&u_{k-1}^T&kn_2\cr
n_1u_{k-1} &D(P_{k-1}) &n_2w_{k-1} \cr
kn_1&w_{k-1}^T & n_2-1
\end{matrix}\right).
\end{displaymath}
 Moreover, $\{X_1,X_2,\cdots,X_{k+1}\}$ is an equitable partition. By lemma~\ref{Eq-P}, $\lambda({\mathbb{P}_{n_1+1,2,\cdots,2,n_2+1}})=\lambda(B(\mathbb{P}_{n_1+1,2,\cdots,2,n_2+1}))$.
\end{proof}

Now  we consider the  characteristic polynomial of the matrix $B(\mathbb{P}_{n_1+1,2,\cdots,2,n_2+1})$.
\begin{lemma}\label{Lem1}
If $x >\lambda(P_{k-1})$ with  $k>2$, then\\
(1) $u_{k-1}^{T}[x I-D(P_{k-1})]^{-1}w_{k-1}=w_{k-1}^T[x I-D(P_{k-1})]^{-1}u_{k-1}>0$;\\
(2) $u_{k-1}^T[x I-D(P_{k-1})]^{-1}u_{k-1}=w_{k-1}^T[x I-D(P_{k-1})]^{-1}w_{k-1}>0$;\\
(3) $||u_{k-1}-w_{k-1}||^2=\frac{(k-1)k(k-2)}{3}$;\\
(4) $u_{k-1}^T[x I-D(P_{k-1})]^{-1}(u_{k-1}-w_{k-1})=\frac{1}{2}(u_{k-1}-w_{k-1})^T[x I-D(P_{k-1})]^{-1}(u_{k-1}-w_{k-1})$.
\end{lemma}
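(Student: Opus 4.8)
The plan is to abbreviate $M := [xI - D(P_{k-1})]^{-1}$ and to exploit two features of $M$: its positivity and the reversal symmetry it inherits from the path. Because $x > \lambda(P_{k-1})$, the matrix $xI - D(P_{k-1})$ is positive definite, so $M$ is symmetric and positive definite. Moreover $D(P_{k-1})$ is entrywise nonnegative, and for $k-1\ge 2$ all off-diagonal distances are positive, so $I + D(P_{k-1})$ is already entrywise positive; from the Neumann expansion $M = \tfrac1x\sum_{t\ge 0}(D(P_{k-1})/x)^t$ (which converges precisely because $x>\lambda(P_{k-1})$) I would conclude that $M$ is entrywise \emph{strictly} positive. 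The second feature is persymmetry: reversing the path is an automorphism, so $Q\,D(P_{k-1})\,Q = D(P_{k-1})$, and since $Q=Q^T=Q^{-1}$ this gives $QM=MQ$ and $QMQ=M$. Finally I would record the two auxiliary vectors: since $(u_{k-1})_i=i$ and $w_{k-1}=Qu_{k-1}$ has $(w_{k-1})_i=k-i$, we get $u_{k-1}+w_{k-1}=k\mathbb{1}$, a $Q$-symmetric vector, while $u_{k-1}-w_{k-1}$ has entries $2i-k$ and satisfies $Q(u_{k-1}-w_{k-1})=-(u_{k-1}-w_{k-1})$.

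For parts (1) and (2) the equalities are pure symmetry. Using $w_{k-1}=Qu_{k-1}$, $Q^T=Q$ and $QM=MQ$, I compute $w_{k-1}^T M u_{k-1} = u_{k-1}^T Q M u_{k-1} = u_{k-1}^T M Q u_{k-1} = u_{k-1}^T M w_{k-1}$, which is (1); similarly $w_{k-1}^T M w_{k-1}=u_{k-1}^T (QMQ) u_{k-1} = u_{k-1}^T M u_{k-1}$, which is (2). The strict positivity of these quantities then follows from the entrywise positivity of $M$ together with the strict positivity of the entries of $u_{k-1}$ and $w_{k-1}$ (for (2) one may instead simply invoke positive definiteness).

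For part (3) I would substitute $(u_{k-1}-w_{k-1})_i = 2i-k$ and evaluate $\sum_{i=1}^{k-1}(2i-k)^2$ with the standard formulas for $\sum i$ and $\sum i^2$; this collapses to $\tfrac{(k-2)(k-1)k}{3}$. Part (4) is where the symmetric/antisymmetric splitting pays off. Writing $a:=u_{k-1}+w_{k-1}$ and $b:=u_{k-1}-w_{k-1}$, the crucial observation is the orthogonality $a^T M b = 0$: since $Qa=a$, $Qb=-b$ and $QM=MQ$, we get $a^T M b = a^T Q M b = a^T M Q b = -a^T M b$, forcing $a^T M b=0$. Then from $u_{k-1}=\tfrac12(a+b)$ we obtain $u_{k-1}^T M b = \tfrac12(a^T M b + b^T M b)=\tfrac12\, b^T M b$, which is exactly statement (4).

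The genuinely structural steps — hence where I expect any difficulty to lie — are the two nonroutine inputs about $M$: establishing its entrywise positivity (needed for the strict inequalities in (1) and (2)) and establishing the commutation $QM=MQ$ (needed for every equality and for the orthogonality $a^TMb=0$ in (4)). Once these are in hand, the identities (1), (2), and (4) are forced by the $Q$-symmetry and the vanishing cross term, while (3) reduces to a direct summation, so no further estimates are required.
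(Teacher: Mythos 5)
Your proof is correct, and its main ingredients coincide with the paper's: the Neumann expansion $[xI-D(P_{k-1})]^{-1}=\tfrac1x\sum_{j\ge0}(D(P_{k-1})/x)^j$ for entrywise positivity, the reversal symmetry $QD(P_{k-1})Q=D(P_{k-1})$ for the equalities in (1) and (2), and a direct summation for (3) (the paper splits into the cases $k$ odd and $k$ even rather than summing $\sum_i(2i-k)^2$ in one stroke, but the computation is the same). The only genuine divergence is in part (4): the paper simply expands $(u_{k-1}-w_{k-1})^TM(u_{k-1}-w_{k-1})$ into four terms and cancels using (1) and (2), whereas you split $u_{k-1}=\tfrac12(a+b)$ into its $Q$-symmetric and $Q$-antisymmetric parts and derive the orthogonality $a^TMb=0$ from $Qa=a$, $Qb=-b$, $QM=MQ$. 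Both are valid; the paper's version is more pedestrian but needs no new structural fact, while yours isolates the symmetry mechanism that actually drives the identity (and the same orthogonality reappears implicitly later in the paper, when the Perron vector of $D(P_{k-1})$ is shown to be orthogonal to $u_{k-1}-w_{k-1}$).
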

\begin{proof}
(1) Since $x>\lambda(D(P_{k-1}))$ and $D(P_{k-1})$ is a nonnegative symmetric matrix, we have
\begin{equation}\label{E1}
[x I-D(P_{k-1})]^{-1}=\frac{1}{x}\sum_{j=0}^{\infty}\left(\frac{D(P_{k-1})}{x}\right)^j,
\end{equation}
which  is  also a nonnegative symmetric matrix. Then
$$u_{k-1}^T[x I-D(P_{k-1})]^{-1}w_{k-1}=w_{k-1}^T[x I-D(P_{k-1})]^{-1}u_{k-1}>0.$$
(2).  By equation~(\ref{E1}),
$$u_{k-1}^T[x I-D(P_{k-1})]^{-1}u_{k-1}>0 \ \mbox {and} ~w_{k-1}^T[x I-D(P_{k-1})]^{-1}w_{k-1}>0.$$
Note that $w_{k-1}=Qu_{k-1}$, $ Q^TQ=I$ and $Q^TD(P_{k-1})Q=D(P_{k-1})$, we have
\begin{eqnarray*}
w_{k-1}^T[x I-D(P_{k-1})]^{-1}w_{k-1}&=&u_{k-1}^TQ^T[x I-D(P_{k-1})]^{-1}Qu_{k-1}\\
&=&u_{k-1}^T\frac{Q^T}{x}\sum_{j=0}^{\infty}\left(\frac{D(P_{k-1})}{x}\right)^jQu_{k-1}\\
&=&u_{k-1}^T\frac{1}{x}\sum_{j=0}^{\infty}\left(\frac{Q^TD(P_{k-1})Q}{x}\right)^ju_{k-1}\\
&=&u_{k-1}^T\frac{1}{x}\sum_{j=0}^{\infty}\left(\frac{D(P_{k-1})}{x}\right)^ju_{k-1}\\
&=&u_{k-1}^T[x I-D(P_{k-1})]^{-1}u_{k-1}.
\end{eqnarray*}
(3). Divide into the following two cases to prove the assertion.\\
{\bf Case 1}: If $k$ is odd and $u_{k-1}-w_{k-1}=(2-k,\cdots,-3,-1,1,3,\cdots,k-2)^T$, then
\begin{eqnarray*}
||u_{k-1}-w_{k-1}||&=&\sqrt{2(1^2+3^2+\cdots+(k-2)^2)}\\
&=&\sqrt{2\left(\frac{(k-1)k(2k-1)}{6}-2^2+4^2+\cdots+(k-1)^2\right)}\\
&=&\sqrt{2\left(\frac{(k-1)k(2k-1)}{6}-4(1^2+2^2+\cdots+(\frac{k-1}{2})^2)\right)}\\
&=&\sqrt{2\left(\frac{(k-1)k(2k-1)}{6}-\frac{(k-1)k(k+1)}{6})\right)}\\
&=&\sqrt{\frac{(k-1)k(k-2)}{3}}.
\end{eqnarray*}
{\bf Case 2}: If $k$ is even and $u_{k-1}-w_{k-1}=(2-k,\cdots,-2,0,2,4,\cdots,k-2)^T$, then
\begin{eqnarray*}
||u_{k-1}-w_{k-1}||&=&\sqrt{2\left(2^2+4^2+\cdots+(k-2)^2\right)}\\
&=&\sqrt{2\left(4(1^2+2^2+\cdots+(\frac{k-2}{2})^2)\right)}\\
&=&\sqrt{\frac{(k-1)k(k-2)}{3}}.
\end{eqnarray*}
(4). Since
\begin{eqnarray*}
&&(u_{k-1}-w_{k-1})^T[x I-D(P_{k-1})]^{-1}(u_{k-1}-w_{k-1})\\
&=&u_{k-1}^T[x I-D(P_{k-1})]^{-1}(u_{k-1}-w_{k-1})-w_{k-1}^T[x I-D(P_{k-1})]^{-1}(u_{k-1}-w_{k-1})\\
&=&u_{k-1}^T[x I-D(P_{k-1})]^{-1}u_{k-1}-u_{k-1}^T[x I-D(P_{k-1})]^{-1}w_{k-1}-w_{k-1}^T[x I-D(P_{k-1})]^{-1}u_{k-1}\\
&&+w_{k-1}^T[x I-D(P_{k-1})]^{-1}w_{k-1},
\end{eqnarray*}
then
\begin{eqnarray*}
&&(u_{k-1}-w_{k-1})^T[x I-D(P_{k-1})]^{-1}(u_{k-1}-w_{k-1})=2u_{k-1}^T[x I-D(P_{k-1})]^{-1}(u_{k-1}-w_{k-1})
\end{eqnarray*}
which is from (1) and (2). So
$$u_{k-1}^T[x I-D(P_{k-1})]^{-1}(u_{k-1}-w_{k-1})=\frac{1}{2}(u_{k-1}-w_{k-1})^T[x I-D(P_{k-1})]^{-1}(u_{k-1}-w_{k-1}).$$
\end{proof}
\begin{lemma}\label{Lem2}
If $x>\lambda({P_{k-1}})$, then
\begin{eqnarray*}
&&det (x I -B(\mathbb{P}_{n_1+1,2,\cdots,2,n_2+1}))\\
&=&[ (x+1)^2-(n_1+n_2)(1+\alpha_{k-1} )(x+1)+n_1n_2 (1+\alpha_{k-1}+k+\beta_{k-1} )(1-k+\gamma_{k-1})]\\
&&\cdot det(x I-D(P_{k-1})),
\end{eqnarray*}
where $$\alpha_{k-1}=u_{k-1}^T(x I-D(P_{k-1}))^{-1}u_{k-1},~\beta_{k-1}=u_{k-1}^T(x I-D(P_{k-1}))^{-1}w_{k-1},$$
$$\gamma_{k-1}=\frac{1}{2}(u_{k-1}-w_{k-1})^T(x I-D(P_{k-1}))^{-1}(u_{k-1}-w_{k-1}).$$
\end{lemma}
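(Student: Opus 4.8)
The plan is to evaluate the determinant through a Schur complement taken with respect to the central block $xI-D(P_{k-1})$. This block is invertible precisely under the hypothesis $x>\lambda(P_{k-1})$, since then $xI-D(P_{k-1})$ is positive definite; write $E=xI-D(P_{k-1})$ for brevity. First I would apply the simultaneous row-and-column permutation that moves the first and last indices of $xI-B(\mathbb{P}_{n_1+1,2,\cdots,2,n_2+1})$ to the front. Because the same permutation acts on rows and columns, the determinant is unchanged, and the matrix is grouped into a $2\times 2$ corner block together with $E$ in the lower-right corner:
$$
P=\begin{pmatrix} x-(n_1-1) & -kn_2\\ -kn_1 & x-(n_2-1)\end{pmatrix},\quad
R=\begin{pmatrix}-u_{k-1}^T\\ -w_{k-1}^T\end{pmatrix},\quad
S=\begin{pmatrix}-n_1u_{k-1} & -n_2w_{k-1}\end{pmatrix}.
$$

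Next I would use the block formula $\det = \det(E)\,\det\!\big(P-RE^{-1}S\big)$. A direct multiplication gives
$$
RE^{-1}S=\begin{pmatrix} n_1\,u_{k-1}^TE^{-1}u_{k-1} & n_2\,u_{k-1}^TE^{-1}w_{k-1}\\ n_1\,w_{k-1}^TE^{-1}u_{k-1} & n_2\,w_{k-1}^TE^{-1}w_{k-1}\end{pmatrix},
$$
and here Lemma~\ref{Lem1}(1) and (2) are exactly what collapse the four quadratic forms into the two symbols $\alpha_{k-1}$ and $\beta_{k-1}$, so that $RE^{-1}S=\left(\begin{smallmatrix} n_1\alpha_{k-1} & n_2\beta_{k-1}\\ n_1\beta_{k-1} & n_2\alpha_{k-1}\end{smallmatrix}\right)$. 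Subtracting from $P$ and expanding the $2\times2$ determinant with the abbreviations $a=1+\alpha_{k-1}$, $b=k+\beta_{k-1}$ yields
$$
\det\!\big(P-RE^{-1}S\big)=(x+1)^2-(n_1+n_2)a\,(x+1)+n_1n_2\,(a^2-b^2).
$$

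Finally I would factor the last term as a difference of squares, $a^2-b^2=(a+b)(a-b)=(1+\alpha_{k-1}+k+\beta_{k-1})(1+\alpha_{k-1}-k-\beta_{k-1})$, and identify the second factor with $1-k+\gamma_{k-1}$. This identification is precisely Lemma~\ref{Lem1}(4): expanding the definition of $\gamma_{k-1}$ and invoking the symmetries from parts (1),(2) gives $\gamma_{k-1}=u_{k-1}^TE^{-1}(u_{k-1}-w_{k-1})=\alpha_{k-1}-\beta_{k-1}$, hence $1+\alpha_{k-1}-k-\beta_{k-1}=1-k+\gamma_{k-1}$. Multiplying back by $\det(xI-D(P_{k-1}))=\det(E)$ produces exactly the asserted expression.

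I expect the only genuine obstacle to be bookkeeping rather than ideas: choosing the block partition so that the symmetries of Lemma~\ref{Lem1} apply cleanly, and correctly tracking the signs introduced both by the permutation and by the $-RE^{-1}S$ term. Once the difference-of-squares factorization is recognized and the relation $\gamma_{k-1}=\alpha_{k-1}-\beta_{k-1}$ is in hand, the remaining algebra is routine, and the hypothesis $x>\lambda(P_{k-1})$ is what legitimizes the entire Schur-complement computation.
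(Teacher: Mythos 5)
Your proposal is correct and follows essentially the same route as the paper: both arguments reduce the determinant to the Schur complement of the central block $xI-D(P_{k-1})$ (the paper via explicit row/column elimination, you via the block-determinant formula after a harmless simultaneous permutation), invoke Lemma~\ref{Lem1}(1),(2) to identify the four quadratic forms with $\alpha_{k-1}$ and $\beta_{k-1}$, factor the resulting difference of squares, and use Lemma~\ref{Lem1}(4) to rewrite $\alpha_{k-1}-\beta_{k-1}$ as $\gamma_{k-1}$. No gaps.
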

\begin{proof}
Since $x>\lambda({P_{k-1}})$, then $x I-D(P_{k-1})$ is an invertible matrix. By lemma~\ref{Lem1},
\begin{eqnarray*}
&&det (x I -B(\mathbb{P}_{n_1+1,2,\cdots,2,n_2+1}))\\
&=&det \left(\begin{matrix}
x -(n_1-1)&-u_{k-1}^T&-kn_2\cr
-n_1u_{k-1} &x I-D(P_{k-1}) &-n_2w_{k-1} \cr
-kn_1&-w_{k-1}^T &x-( n_2-1)
\end{matrix}\right)\\
&=&det \left(\begin{matrix}
x -(n_1-1)-n_1\alpha_{k-1}&0&-kn_2-n_2\beta_{k-1}\cr
-n_1u_{k-1} &x I-D(P_{k-1}) &-n_2w_{k-1} \cr
-kn_1-n_1\beta_{k-1}&0&x-( n_2-1)-n_2\alpha_{k-1}
\end{matrix}\right)\\
&&=det \left(\begin{matrix}
x -(n_1-1)-n_1\alpha_{k-1}&0&-kn_2-n_2\beta_{k-1}\cr
0 &x I-D(P_{k-1}) &0\cr
-kn_1-n_1\beta_{k-1}&0&x-( n_2-1)-n_2\alpha_{k-1}
\end{matrix}\right)\\
&=&[ (x+1)^2-(n_1+n_2)(1+\alpha_{k-1} )(x+1)+n_1n_2 (1+\alpha_{k-1} )^2-n_1n_2 (k+\beta_{k-1} )^2]\\
&&\cdot det(x I-D(P_{k-1})).\\
&=&[ (x+1)^2-(n_1+n_2)(1+\alpha_{k-1} )(x+1)+n_1n_2 (1+\alpha_{k-1}+k+\beta_{k-1} )\cdot\\
&&(1-k+\alpha_{k-1}-\beta_{k-1})]\cdot det(x I-D(P_{k-1})).
\end{eqnarray*}
Moreover,
\begin{eqnarray*}
\alpha_{k-1}-\beta_{k-1}&=&u_{k-1}^T(x I-D(P_{k-1}))^{-1}(u_{k-1}-w_{k-1})\\
&=&\frac{1}{2}(u_{k-1}-w_{k-1})^T(x I-D(P_{k-1}))^{-1}(u_{k-1}-w_{k-1})=\gamma_{k-1}. \end{eqnarray*}
Then
\begin{eqnarray*}
&&det (x I -B(\mathbb{P}_{n_1+1,2,\cdots,2,n_2+1}))\\
&=&[ (x+1)^2-(n_1+n_2)(1+\alpha_{k-1} )(x+1)+n_1n_2 (1+\alpha_{k-1}+k+\beta_{k-1} )(1-k+\gamma_{k-1})]\\
&&\cdot det(x I-D(P_{k-1})).
\end{eqnarray*}
\end{proof}

\begin{lemma}\label{Lem3}
Let  $\mathbb{P}_{n_1+1,2,\cdots,2,n_2+1}$ be a clique path with $k$ cliques, where $n_1$ and $n_2$  are nonnegative integers. Then
$$\lambda(\mathbb{P}_{n_1+1,2,\cdots,2,n_2+1})\ge \lambda(P_{k})> \frac{(k-1)(k+1)}{3}.$$
\end{lemma}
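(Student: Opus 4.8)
The plan is to prove the two inequalities separately: the left one by exhibiting the ordinary path $P_k$ as an \emph{isometric} subgraph and invoking interlacing, and the right one by a Rayleigh-quotient estimate with the all-ones vector.

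For the inequality $\lambda(\mathbb{P}_{n_1+1,2,\cdots,2,n_2+1})\ge \lambda(P_k)$, I would first locate a copy of $P_k$ inside the clique path that realizes the true graph distances. Using the notation of Corollary~\ref{cor1}, the $k-1$ cut vertices $v_2,v_3,\cdots,v_k$ already induce an isometric path $P_{k-1}$, since their mutual distances are exactly $|i-j|$. As $n_1+n_2\ge 1$ (at least one end clique is nontrivial), I may adjoin one private end vertex, say $a\in X_1$ when $n_1\ge 1$ or $a\in X_{k+1}$ when $n_2\ge 1$. The set $\{a,v_2,\cdots,v_k\}$ then induces a path on $k$ vertices, and because every shortest $a$--$v_j$ path runs along the cut vertices, the induced distances again coincide with the path metric. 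Hence this vertex set is isometric, so the corresponding $k\times k$ block of $D(\mathbb{P}_{n_1+1,2,\cdots,2,n_2+1})$ is literally $D(P_k)$. Cauchy interlacing for symmetric matrices then gives $\lambda_1\!\big(D(\mathbb{P}_{n_1+1,2,\cdots,2,n_2+1})\big)\ge \lambda_1\!\big(D(P_k)\big)$, that is, $\lambda(\mathbb{P}_{n_1+1,2,\cdots,2,n_2+1})\ge\lambda(P_k)$.

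For the strict inequality $\lambda(P_k)>\frac{(k-1)(k+1)}{3}$, I would test the symmetric matrix $D(P_k)$ against $\mathbb{1}$. A direct summation gives
$$\mathbb{1}^T D(P_k)\mathbb{1}=\sum_{i,j=1}^{k}|i-j|=2\sum_{1\le i<j\le k}(j-i)=2\sum_{d=1}^{k-1}d(k-d)=\frac{(k-1)k(k+1)}{3}.$$
Since $\lambda(P_k)$ is the largest eigenvalue of $D(P_k)$, the Rayleigh principle yields $\lambda(P_k)\ge \mathbb{1}^T D(P_k)\mathbb{1}/(\mathbb{1}^T\mathbb{1})=\frac{(k-1)(k+1)}{3}$. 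Equality would force $\mathbb{1}$ to be a Perron eigenvector, i.e.\ $D(P_k)$ to have constant row sums; but for $k\ge 3$ (the range relevant here, as in Lemma~\ref{Lem1} where $k>2$) an endpoint of $P_k$ has row sum $\frac{k(k-1)}{2}$, strictly larger than the row sum of an interior vertex, so the row sums are non-constant and the inequality is strict.

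The computations—the telescoping sum $\sum_{d=1}^{k-1}d(k-d)=\frac{(k-1)k(k+1)}{6}$ and the distance list of $P_k$—are routine. The only genuine point to get right is the isometric embedding: one must verify that adjoining the private end vertex creates no shortcut, so that the selected block of the distance matrix equals $D(P_k)$ exactly rather than merely bounding it. One should also note that the standing hypothesis ``$k$ cliques'' forces $n_1+n_2\ge1$, which rules out the degenerate case $n_1=n_2=0$ (where the graph would collapse to $P_{k-1}$ and the claimed bound would fail).
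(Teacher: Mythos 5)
Your proof is correct and follows essentially the same route as the paper: the paper likewise notes that $D(P_k)$ is a principal submatrix of $D(\mathbb{P}_{n_1+1,2,\cdots,2,n_2+1})$ (hence interlacing gives the first inequality) and bounds $\lambda(P_k)$ from below by the Rayleigh quotient $\mathbb{1}_k^TD(P_k)\mathbb{1}_k/k=\frac{(k-1)(k+1)}{3}$. You are simply more explicit on two points the paper leaves implicit --- the isometric embedding of $P_k$ and the strictness of the Rayleigh bound via the non-constant row sums.
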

\begin{proof}
Since $D(P_{k})$ is a principal submatrix of $D({\mathbb{P}_{n_1+1,2,\cdots,2,n_2+1}})$, we have
$\lambda(\mathbb{P}_{n_1+1,2,\cdots,2,n_2+1})\ge \lambda(P_{k})>\frac{\mathbb{1}_k^TD(P_{k})\mathbb{1}_k}{k}=\frac{(k-1)(k+1)}{3}.$
\end{proof}
\begin{lemma}\label{L2.7}\cite{Linl2015}
Among all clique trees with cliques $K_{n_1},...,K_{n_k},$ the graph attains the maximum distance spectral radius is $P_{n_1'+1,2,...,2,n_k'+1}$
\end{lemma}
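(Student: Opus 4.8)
The plan is to prove the statement by a sequence of local surgeries on the block structure, each one certified to weakly increase the distance spectral radius, until the graph is forced into the claimed shape. Throughout I work with the distance matrix $D(H)$ of a candidate maximizer $H$; since $H$ is connected, $D(H)$ is a nonnegative irreducible symmetric matrix, so by Perron--Frobenius $\lambda(H)$ is simple and carries a positive unit eigenvector $\mathbf{f}$ with $\lambda(H)=\mathbf{f}^{T}D(H)\mathbf{f}$. The engine of every step is the comparison principle: if $G'$ has the same vertex set as $H$, then $\lambda(G')\ge \mathbf{f}^{T}D(G')\mathbf{f}$, so it suffices to exhibit a surgery (keeping the multiset of clique orders, hence $n$ and $k$, fixed) for which $\mathbf{f}^{T}\big(D(G')-D(H)\big)\mathbf{f}>0$; this then forces $\lambda(G')>\lambda(H)$.

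First I would reduce to a clique path. If $H$ is not a clique path its block structure branches --- either some cut vertex lies in three or more cliques, or some clique contains three or more cut vertices --- and in either case I would detach one clique--branch at the branching point and reattach it at an endpoint of a longest path of cliques, lengthening that path while preserving all clique orders. The relocated vertices move from a central to a peripheral position; that this relocation raises $\lambda$ follows from the eigenvector--weighted estimate discussed below. Since each such move strictly reduces the branching, iterating terminates and leaves a clique path $\mathbb{P}_{m_1,\dots,m_k}$ with $\sum_i m_i=n+k-1$.

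Next I would concentrate the orders into the two terminal cliques. Writing the cliques of the path as $C_1,\dots,C_k$ in order with $|C_i|=m_i$, I would relocate a private (non--cut) vertex of an interior clique $C_i$ ($2\le i\le k-1$) to whichever terminal clique, $C_1$ or $C_k$, is farther from $C_i$, and show this strictly increases $\lambda$ unless $m_i=2$ already. Repeating drives every interior order down to $2$ and deposits all surplus vertices in the two ends, which is exactly the form $\mathbb{P}_{n_1'+1,2,\dots,2,n_k'+1}$; the optimal split $(n_1',n_k')$ is then what the remainder of the paper determines.

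The crux, and the step I expect to be hardest, is verifying $\mathbf{f}^{T}\big(D(G')-D(H)\big)\mathbf{f}>0$ for these relocations, because $D(G')-D(H)$ genuinely carries both signs: moving a vertex toward one end lengthens its distances to the far side but shortens them to the near side, so a naive entrywise comparison is unavailable. To control this I would first prove an ordering of the Perron vector along a clique path, namely that entries at more peripheral vertices are at least as large as those at more central ones (and that private vertices of a common clique share one value), and then pair each shortened distance against a corresponding lengthened one. Dominating the negative terms by the positive terms with the help of this monotonicity makes the eigenvector--weighted sum strictly positive, which is what closes both phases; establishing that monotonicity cleanly for an arbitrary clique path is the main technical obstacle.
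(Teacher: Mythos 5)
First, a point of orientation: the paper does not prove this lemma at all --- it is imported verbatim from \cite{Linl2015} and used as a black box (the paper's own contribution begins afterwards, with the optimal split of the two terminal cliques via the quotient-matrix computation of Lemma~2.4). So there is no in-paper proof to compare against; you are reproving a cited result. Your overall strategy --- local surgeries on the block structure, each certified by the Rayleigh-quotient inequality $\lambda(G')\ge \mathbf{f}^{T}D(G')\mathbf{f}$ with $\mathbf{f}$ the Perron vector of $D(H)$ --- is the right family of techniques for statements of this kind, and the reduction ``first to a clique path, then push interior vertices to the ends'' is the natural decomposition.

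As written, however, the proposal has a genuine gap, and you name it yourself: everything reduces to showing $\mathbf{f}^{T}\bigl(D(G')-D(H)\bigr)\mathbf{f}>0$ for each surgery, and the tool you propose --- monotonicity of the Perron vector toward the periphery, followed by pairing shortened distances against lengthened ones --- is neither established nor, as described, pointed in the right direction. Concretely, in your second phase only the row of $D$ indexed by the relocated private vertex $v$ changes, so the sign of the change in the quadratic form is that of $\sum_u\bigl(d'(v,u)-d(v,u)\bigr)f_u$. When $v$ moves from an interior clique $C_i$ to the far terminal clique $C_k$, the distances that \emph{decrease} are exactly those to vertices in cliques $C_j$ with $j>(k+i)/2$, i.e.\ to the most peripheral vertices on that side --- precisely the vertices to which your claimed monotonicity assigns the \emph{largest} Perron weights. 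So the negative terms carry the heaviest weights, and the asserted domination of negative by positive terms does not follow from the monotonicity alone; the same sign problem recurs in phase one, since a branch reattached at an endpoint moves closer to the cliques near its new attachment point. A correct argument must either choose the destination adaptively (according to where the Perron weight sits, not according to graph distance) or make a genuinely quantitative estimate of the two competing sums; that estimate, together with a proof of the monotonicity you invoke, is exactly the content that is missing. Until it is supplied, the proposal is a plausible plan rather than a proof, and one cannot simply substitute it for the citation of \cite{Linl2015}.
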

Now  we  are ready to prove conjecture~\ref{con}.

\begin{proof}
By theorem~\ref{InT}, it is sufficient to prove that $\mathbb{P}_{\lfloor \frac{n-k+3}{2}\rfloor,2,\cdots,2,\lceil \frac{n-k+3}{2}\rceil}$ has the maximum spectral radius in the clique tree. By lemma~\ref{L2.7}, the graph which attains the maximum distance spectral radius is the clique path $\mathbb{P}_{n_1+1,2,\cdots,2,n_2+1}$. If $n_1,n_2,n_1',n_2'$ are positive integers such that
\begin{equation}\label{Ceq}\max\{n_1,n_2\}<\max\{n_1',n_2'\} ~~\mbox{and}~~ n_1+n_2=n_1'+n_2'=n-1-k,\end{equation}
which implies that $n_1n_2>n_1'n_2'$, we will show that $$\lambda(\mathbb{P}_{n_1+1,2,\cdots,2,n_2+1})>\lambda(\mathbb{P}_{n_1'+1,2,\cdots,2,n_2'+1}).$$

Let $f(n_1,n_2,x)=det (x I -B(\mathbb{P}_{n_1+1,2,\cdots,2,n_2+1}))$ be the characteristic polynomial of matrix $B(\mathbb{P}_{n_1+1,2,\cdots,2,n_2+1})$, where $B(\mathbb{P}_{n_1+1,2,\cdots,2,n_2+1})$ is a $(k+1)\times (k+1)$ matrix, similarly define  $f(n_1',n_2',x)$.
By the definition of $f(n_1,n_2,x), f(n_1',n_2',x)$ and lemma~\ref{Lem2}
\begin{eqnarray}\label{MEQ}
f(n_1,n_2,x)-f(n_1',n_2',x)&=&(n_1n_2-n_1'n_2')(1+\alpha_{k-1}+k+\beta_{k-1} )(1-k+\gamma_{k-1})]\nonumber\\
&&\cdot det(x I-D(P_{k-1})).
\end{eqnarray}
Suppose $x\ge \lambda(P_{k})>\lambda(P_{k-1})$,  by lemma~\ref{Lem1},
\begin{equation}\label{Meq1}\alpha_{k-1},\beta_{k-1}>0~\mbox{and}~det(x I-D(P_{k-1}))>0.\end{equation}
By the Perron-Frobenius theorem, there is an unique unit positive eigenvector $X=(x_1,x_2,\cdots,x_{k-1})^T$ of $D(P_{k-1})$ corresponding to $\lambda(P_{k-1})$. Then
$$\lambda(P_{k-1})=X^TQ^T D(P_{k-1})QX=X^TD(P_{k-1})X,$$
which implies that $QX$ is also an unit positive eigenvector of $D(P_{k-1})$ corresponding to $\lambda(P_{k-1})$, where $Q$ is a $(k-1)\times (k-1)$ permutation matrix  whose $(i,k-i)$-element is one for $1\le i\le k-1$. Thus $X=QX$, that is, $x_i=x_{k-i}$, $1\le i\le k-1$. Hence
$$X^T(u_{k-1}-w_{k-1})=0. $$
Furthermore, by Courant-Fischer theorem (refer to theorem 4.2.6 in \cite{Horn}),
 \begin{eqnarray}\label{Meq4}
\gamma_{k-1}&=&\frac{1}{2}(u_{k-1}-w_{k-1})^T[x I-D(P_{k-1})]^{-1}(u_{k-1}-w_{k-1})\nonumber\\
&\le& \frac{1}{2}||u_{k-1}-w_{k-1}||^2\frac{1}{x -\lambda_2(D(P_{k-1}))}.
\end{eqnarray}
By $x\ge \lambda(P_{k})>\lambda(P_{k-1})$ and lemma~\ref{Lem3}, we have
\begin{equation}\label{Eq3}
x\ge  \lambda(P_{k})> \frac{(k-1)(k+1)}{3}.
\end{equation}
By theorem~\ref{InT} and lemma~\ref{Lem1},
\begin{equation}\label{Meq2}\lambda_2(D(P_{k-1}))<0 \mbox{ and }||u_{k-1}-w_{k-1}||^2=\frac{(k-1)k(k-2)}{3}.\end{equation}
Inequalities $(\ref{Meq4})$, $(\ref{Eq3})$ and $(\ref{Meq2})$ implies that
\begin{eqnarray}\label{Meq5}
\gamma_{k-1}\le\frac{(k-1)k(k-2)}{6}\frac{1}{x }<\frac{(k-1)k(k-2)}{6}\frac{1}{\frac{(k-1)(k+1)}{3} }<k-1.
\end{eqnarray}
Take inequalities $(\ref{Meq1})$, $(\ref{Meq5})$ and $n_1n_2>n_1'n_2'$ into $(\ref{MEQ})$,
$$f(n_1,n_2,x)-f(n_1',n_2',x)>0.$$
Which implies that $f(n_1,n_2,x)-f(n_1',n_2',x)>0$ for $x\ge \lambda(P_{k})$. Since $\lambda(\mathbb{P}_{n_1+1,2,\cdots,2,n_2+1})\ge \lambda(P_{k})$ and $\lambda(\mathbb{P}_{n_1'+1,2,\cdots,2,n_2'+1})\ge \lambda(P_{k})$, then
$$\lambda(\mathbb{P}_{n_1+1,2,\cdots,2,n_2+1})>\lambda(\mathbb{P}_{n_1'+1,2,\cdots,2,n_2'+1}).$$
Since $\lfloor \frac{n-k+1}{2}\rfloor,\lceil \frac{n-k+1}{2}\rceil$ are the only numbers satisfying that, for every positive integers $n_1',n_2'$ and $n_1'+n_2'=n+1-k $ such that
$$\max\{\lfloor \frac{n-k+1}{2}\rfloor,\lceil \frac{n-k+1}{2}\rceil\}\le \max\{n_1',n_2'\}\mbox{ and }$$
$$\lfloor \frac{n-k+1}{2}\rfloor+\lceil \frac{n-k+1}{2}\rceil=n_1'+n_2'=n+1-k. $$
Thus $\mathbb{P}_{\lfloor \frac{n-k+1}{2}+1\rfloor,2,\cdots,2,\lceil \frac{n-k+1}{2}\rceil}+1=\mathbb{P}_{\lfloor \frac{n-k+3}{2}\rfloor,2,\cdots,2,\lceil \frac{n-k+3}{2}\rceil}$ has the maximum spectral radius in the clique tree.
\end{proof}
\begin{center}
{\bf Acknowledgments}
\end{center}
The authors are grateful to the referees for their valuable comments and suggestions which lead to a great improvement of this paper.

\end{document}